\documentclass[11pt]{article}
\usepackage{amsmath,amsthm,amssymb}
\usepackage[dvips]{graphicx}

\setlength{\topmargin}{-1cm}
\setlength{\headsep}{1cm}
\setlength{\textwidth}{15cm}
\setlength{\textheight}{23cm}
\setlength{\oddsidemargin}{1cm}
\setlength{\evensidemargin}{0cm}

\pagestyle{myheadings}\markright{A lower bound for the minimum deviation of the Chebyshev polynomial on a real set}

\newcommand{\C}{\mathbb C}
\newcommand{\N}{\mathbb N}
\newcommand{\R}{\mathbb R}
\renewcommand{\P}{\mathbb P}
\newcommand{\CAP}{\operatorname{cap}}
\newcommand{\re}{\operatorname{Re}}

\begin{document}


\title{A Lower Bound for the Minimum Deviation of the Chebyshev Polynomial on a Compact Real Set\footnote{published in: East Journal on Approximations {\bf 14} (2008), 223--233.}}
\author{Klaus Schiefermayr\footnote{University of Applied Sciences Upper Austria, School of Engineering and Environmental Sciences, Stelzhamerstrasse\,23, 4600 Wels, Austria, \textsc{klaus.schiefermayr@fh-wels.at}}}
\date{}
\maketitle

\theoremstyle{plain}
\newtheorem{theorem}{Theorem}
\newtheorem{lemma}{Lemma}
\newtheorem{definition}{Definition}
\theoremstyle{definition}
\newtheorem{remark}{Remark}
\newtheorem*{example}{Example}


\begin{abstract}
In this paper, we give a sharp lower bound for the minimum deviation of the Chebyshev polynomial on a compact subset of the real line in terms of the corresponding logarithmic capacity. Especially if the set is the union of several real intervals, together with a lower bound for the logarithmic capacity derived recently by A.Yu.\,Solynin, one has a lower bound for the minimum deviation in terms of elementary functions of the endpoints of the intervals. In addition, analogous results for compact subsets of the unit circle are given.
\end{abstract}

\noindent\emph{Mathematics Subject Classification (2000):} 41A50, 41A17

\noindent\emph{Keywords:} Chebyshev constant, Chebyshev polynomial, Compact real sets, Inequality, Logarithmic capacity, Minimal polynomial, Minimum deviation, Several arcs, Several intervals, Transfinite diameter

\section{Introduction}


Let $C\subset\C$ be a compact infinite set in the complex plane and let $\P_n$ be the set of all polynomials with complex coefficients of degree $n$. Then $L_n(C)$, defined by
\begin{equation}
L_n(C)\equiv\max_{z\in{C}}|z^n+\sum_{k=0}^{n-1}c_{k}^{*}z^{k}|:=
\min_{c_{k}\in\C}\max_{z\in{C}}|z^n+\sum_{k=0}^{n-1}c_{k}z^{k}|,
\end{equation}
is usually called the \emph{minimum deviation} of degree $n$ on $C$. The polynomial
\[
M_{n}(z):=z^n+\sum_{k=0}^{n-1}c_{k}^{*}z^{k}\in\P_{n},
\]
for which the minimum is attained, is called the \emph{minimal polynomial} (or \emph{Chebyshev polynomial}) of degree $n$ on $C$. Fekete\,\cite{Fekete} proved that the limit
\begin{equation}
\CAP{C}:=\lim_{n\to\infty}\root{n}\of{L_n(C)}
\end{equation}
exists and the quantity $\CAP{C}$ is called the \emph{logarithmic capacity} (or \emph{Chebyshev constant} or \emph{transfinite diameter}). The logarithmic capacity is monotone, i.e.,
\begin{equation}\label{MonCap}
C_{1}\subseteq{C}_{2}\Longrightarrow\CAP{C_{1}}\leq\CAP{C_{2}}.
\end{equation}
Concerning many other properties of the logarithmic capacity and the connection to potential theory, we refer to \cite{Kirsch} and the references therein. For the logarithmic capacity $\CAP{C}$ and the minimum deviation $L_n(C)$, the inequality
\begin{equation}\label{IneqLnCap1}
L_n(C)\geq(\CAP{C})^n
\end{equation}
may be found in many textbooks and papers like \cite[Appendix\,B]{Simon} and goes back to Szeg{\"o}\,\cite{Szego} and Fekete\,\cite{Fekete}. Inequality \eqref{IneqLnCap1} is sharp: If $C$ is the unit circle then $\CAP{C}=1$, $M_n(z)=z^n$ and $L_n(C)=1$. In this paper, we prove that for all compact \emph{real} infinite sets $C\subset\R$
\begin{equation}\label{IneqLnCap2}
L_n(C)\geq2(\CAP{C})^n,
\end{equation}
where the constant $2$ is best possible. In addition, a large class of compact sets $C\subset\R$ is given such that equality is attained in \eqref{IneqLnCap2}. This is done in Section\,2. Some analogous results concerning compact infinite sets on the unit circle, lying symmetrically with respect to the real line, are given in Section\,3.

\section{Compact Subsets of the Real Line}


The first result gives an identity between the minimum deviation $L_{n}(A)$ and the logarithmic capacity $\CAP{A}$ in the case that the set $A$ is the inverse polynomial image of $[-1,1]$. As usual, for a polynomial $P_{n}\in\P_{n}$,
\begin{equation}
P_{n}^{-1}([-1,1]):=\bigl\{z\in\C:P_{n}(z)\in[-1,1]\bigr\}
\end{equation}
denotes the inverse polynomial image of $[-1,1]$ for the polynomial mapping $P_{n}$.


\begin{theorem}\label{Thm-LnCap}
Let $P_{n}\in\P_{n}$ and $A:=P_{n}^{-1}([-1,1])$. Then
\begin{equation}\label{LnCap}
L_{n}(A)=2\,(\CAP{A})^n.
\end{equation}
\end{theorem}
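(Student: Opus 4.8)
The plan is to prove that both sides of \eqref{LnCap} equal $1/|a|$, where $a\neq0$ denotes the leading coefficient of $P_n$, i.e.\ $P_n(z)=a\,z^n+\dots$. First I would evaluate the right-hand side by the composition formula for the logarithmic capacity under a polynomial preimage, namely $\CAP\bigl(P_n^{-1}(K)\bigr)=\bigl(\CAP K/|a|\bigr)^{1/n}$ for any compact $K$, a standard consequence of the Chebyshev-constant description of $\CAP$ (cf.\ the potential-theoretic references cited in the introduction). Since $\CAP[-1,1]=\tfrac12$, this gives $(\CAP A)^n=1/(2|a|)$ and hence $2\,(\CAP A)^n=1/|a|$, so everything reduces to showing $L_n(A)=1/|a|$.

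For the upper bound the natural candidate for the minimal polynomial is $M_n:=P_n/a$, which is monic of degree $n$. For $z\in A$ we have $P_n(z)\in[-1,1]$, whence $|M_n(z)|\le 1/|a|$, with equality precisely on the nonempty fibre set $E:=P_n^{-1}(\{-1,1\})\subset A$. Therefore $L_n(A)\le\max_{z\in A}|M_n(z)|=1/|a|$, and it remains to establish the matching lower bound.

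For the lower bound let $Q(z)=z^n+\dots$ be an arbitrary monic polynomial, put $S:=a\,Q$ (same leading coefficient $a$ as $P_n$), so that $D:=P_n-S$ has degree at most $n-1$. The key step is a residue identity. Let $\alpha_i$ denote the zeros of $P_n-1$ and $\beta_j$ those of $P_n+1$, with multiplicities $m_i,m_j'$; I would integrate the rational function $v(z)\,P_n'(z)/\bigl(P_n(z)^2-1\bigr)$ over a large circle. Since this function is $O(z^{-2})$ at infinity, the sum of its residues vanishes, and a direct computation (the residue of $P_n'/(P_n\mp1)$ equals the corresponding multiplicity, while $P_n\pm1=2$ on the respective fibres) yields
\begin{equation*}
\textstyle\sum_i m_i\,v(\alpha_i)-\sum_j m_j'\,v(\beta_j)=0\qquad\text{for every }v\in\P_{n-1}.
\end{equation*}
Applying this to $v=D$, and using $D(\alpha_i)=1-S(\alpha_i)$, $D(\beta_j)=-1-S(\beta_j)$ together with $\sum_i m_i=\sum_j m_j'=n$, gives $\sum_i m_i\,S(\alpha_i)-\sum_j m_j'\,S(\beta_j)=2n$. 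Taking real parts and bounding $|S(\alpha_i)|,|S(\beta_j)|\le\max_{z\in A}|S(z)|$ via the triangle inequality forces $\max_{z\in A}|S(z)|\ge1$, i.e.\ $\max_{z\in A}|Q(z)|\ge 1/|a|$. Hence $L_n(A)\ge1/|a|$, and combining with the upper bound and the capacity computation proves \eqref{LnCap}.

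The main obstacle is the lower bound, and within it the annihilation identity. The delicate point is that the residue weights come out to be the positive multiplicities $m_i,m_j'$; this positivity, guaranteed by $P_n\pm1=2$ on the respective fibres, is exactly what makes the triangle-inequality step decisive, and it is where the factor $2$, absent from the generic estimate $L_n\ge(\CAP)^n$, enters. I would also need to dispose of the routine points that $E\neq\emptyset$, that the zero sets $\{\alpha_i\}$ and $\{\beta_j\}$ are disjoint, and that the multiplicity bookkeeping in the residue calculation is valid when $P_n\mp1$ has multiple roots.
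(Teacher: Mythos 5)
Your proposal is correct, but it takes a genuinely different route from the paper's. The paper simply quotes Peherstorfer's theorem that $\tfrac{2}{(2c_n)^k}\,T_k(P_n(z))$, $k=1,2,\dots$, is a sequence of minimal polynomials of degree $kn$ on $A$ with $L_{kn}(A)=2(2|c_n|)^{-k}$; the case $k=1$ gives $L_n(A)=1/|c_n|$, and letting $k\to\infty$ in $\sqrt[kn]{L_{kn}(A)}$ gives $\CAP A=(2|c_n|)^{-1/n}$, so both sides of \eqref{LnCap} come out of that single cited family of extremal polynomials and no separate capacity formula is needed. You instead evaluate the right-hand side by importing the potential-theoretic composition formula $\CAP\bigl(P_n^{-1}(K)\bigr)=\bigl(\CAP K/|a|\bigr)^{1/n}$, and you prove the extremality of $P_n/a$ from scratch: the upper bound is immediate, and for the lower bound your residue argument is sound --- the logarithmic-derivative pole of $P_n'/(P_n\mp1)$ at a root of multiplicity $m$ has residue $m$ even at multiple roots, the opposite fibre contributes the factor $1/(\pm2)$, and with $v=P_n-aQ$, $\sum_i m_i=\sum_j m_j'=n$ and the fact that all $\alpha_i,\beta_j$ lie in $A$, the triangle inequality does force $\max_{z\in A}|Q(z)|\ge1/|a|$. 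What your route buys is a self-contained proof of the $k=1$ extremality statement, valid verbatim for arbitrary complex $P_n$, rather than an appeal to \cite{Peh-1996}; its cost is the capacity composition formula, which is standard (e.g.\ Ransford, \emph{Potential Theory in the Plane}, Theorem 5.2.5) but which you should cite precisely instead of gesturing at the introduction's references, since it is not among the capacity facts the paper actually states. The paper's route buys brevity and, as a by-product of the same limit computation, reproves that composition formula for $K=[-1,1]$; the two are otherwise logically equivalent ways of pinning down the constant $2$.
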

\begin{proof}
Let $P_{n}(z)=c_{n}z^{n}+\ldots\in\P_{n}$, $c_{n}\in\C\setminus\{0\}$, and let $T_{k}(z)=\cos(k\arccos(z))$ be the classical Chebyshev polynomial. In \cite{Peh-1996}, Peherstorfer proved that
\[
\frac{2}{(2c_{n})^{k}}\,T_{k}(P_{n}(z))=z^{kn}+\ldots,\quad~k=1,2,3,\ldots,
\]
is a sequence of minimal polynomials of degree $kn$ on $A$ with minimum deviation
\[
L_{kn}(A)=\frac{2}{(2|c_{n}|)^{k}}
\]
thus
\[
\CAP{A}=\lim_{k\to\infty}\sqrt[kn]{L_{kn}(A)}
=\lim_{k\to\infty}\sqrt[kn]{\frac{2}{(2|c_{n}|)^{k}}}
=\frac{1}{(2|c_{n}|)^{1/n}}=\frac{1}{\sqrt[n]{2}}\,\sqrt[n]{L_{n}(A)}.
\]
\end{proof}


\begin{example}\hfill{}
\begin{enumerate}
\item For $I:=[-1,1]$, we have $T_{n}^{-1}([-1,1])=I$, $L_{n}(I)=\frac{1}{2^{n-1}}$ and $\CAP{I}=\frac{1}{2}$, thus, for the set $I$, relation\,\eqref{LnCap} holds.
\item For $0<\alpha<1$ and $n$ even, the minimal polynomial on $E_{\alpha}:=[-1,-\alpha]\cup[\alpha,1]$ is
\[
M_{n}(z)=\frac{1}{2^{n-1}}(1-\alpha^{2})^{\frac{n}{2}}\,
T_{\frac{n}{2}}\bigl(\frac{2z^2-\alpha^2-1}{1-\alpha^2}\bigr)
\]
which gives
\[
L_{n}(E_{\alpha})=\frac{1}{2^{n-1}}(1-\alpha^{2})^{\frac{n}{2}}\quad\text{and}\quad
\CAP{E_{\alpha}}=\tfrac{1}{2}\sqrt{1-\alpha^{2}},
\]
thus, for the set $E_{\alpha}$, relation\,\eqref{LnCap} holds if $n$ is even.
\item Figure\,\ref{Fig_TPolynomial} shows a polynomial $P_{n}$ of degree $n=9$, for which the inverse polynomial image $A:=P_{n}^{-1}([-1,1])$ consists of four real intervals.
\end{enumerate}
\end{example}


\begin{figure}[ht]
\begin{center}
\includegraphics[scale=1.0]{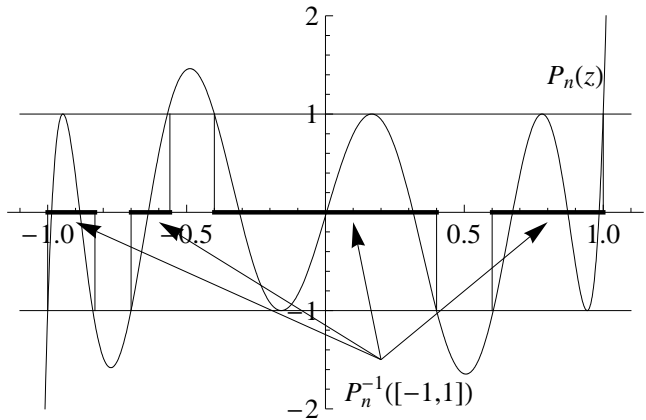}
\caption{\label{Fig_TPolynomial} Polynomial $P_{n}$ of degree $n=9$, for which the inverse polynomial image consists of four real intervals}
\end{center}
\end{figure}


\begin{lemma}\label{Lemma-MinPol}
Let $n\in\N$, let $C\subset\R$ be compact and infinite, let $M_{n}\in\P_{n}$ be the minimal polynomial of degree $n$ on $C$ with minimum deviation $L_{n}\equiv{L}_{n}(C)$, and let $\tilde{M}_{n}:=M_{n}/L_{n}$. Then $C':=\tilde{M}_{n}^{-1}([-1,1])$ is the union of $\ell'$ finite disjoint real intervals, where $1\leq\ell'\leq{n}$, and
\begin{equation}\label{Ca-B}
C\subseteq{C'}\subset\R.
\end{equation}
Moreover, $M_{n}$ is the minimal polynomial of degree $n$ on every compact set $C''$, for which $C\subseteq{C}''\subseteq{C'}$ holds.
\end{lemma}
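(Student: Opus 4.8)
The plan is to reduce everything to the fact that $\tilde{M}_{n}$ is a \emph{real} polynomial that equioscillates on $C$, and then to read off the geometry of its preimage from its critical points. First I would note that since $C\subset\R$ and the minimal polynomial on an infinite compact real set is unique, $M_{n}$ must have real coefficients: the polynomial $z\mapsto\overline{M_{n}(\bar z)}$ is again monic of degree $n$ and has the same maximum modulus on the real set $C$, hence equals $M_{n}$ by uniqueness. Thus $\tilde{M}_{n}=M_{n}/L_{n}$ is real with $\max_{x\in C}|\tilde{M}_{n}(x)|=1$, so $\tilde{M}_{n}(C)\subseteq[-1,1]$ and therefore $C\subseteq\tilde{M}_{n}^{-1}([-1,1])=C'$. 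Applying the Chebyshev equioscillation theorem to the best approximation of $x^{n}$ from $\P_{n-1}$ (a Haar space on the infinite real set $C$), there exist $n+1$ points $t_{0}<t_{1}<\dots<t_{n}$ in $C$ with $\tilde{M}_{n}(t_{j})=\varepsilon(-1)^{j}$ for a fixed sign $\varepsilon\in\{-1,+1\}$.

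Second, I would extract the real structure of $\tilde{M}_{n}$ from this alternation. Since $\tilde{M}_{n}$ changes sign on each $(t_{j-1},t_{j})$, it has at least one zero in each of these $n$ intervals; as $\deg\tilde{M}_{n}=n$, it has exactly $n$ simple real zeros $r_{1}<\dots<r_{n}$, interlacing the $t_{j}$ so that $r_{i}<t_{i}<r_{i+1}$. By Rolle's theorem $\tilde{M}_{n}$ has a critical point $c_{i}$ in each $(r_{i},r_{i+1})$, $i=1,\dots,n-1$, and these are all $n-1$ of its critical points, all real and simple. The key step is the estimate $|\tilde{M}_{n}(c_{i})|\ge 1$: on $(r_{i},r_{i+1})$ the polynomial $\tilde{M}_{n}$ has constant sign and a unique extremum at $c_{i}$, so $|\tilde{M}_{n}(c_{i})|=\max_{[r_{i},r_{i+1}]}|\tilde{M}_{n}|\ge|\tilde{M}_{n}(t_{i})|=1$, using $t_{i}\in(r_{i},r_{i+1})$. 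I expect this critical-value bound to be the heart of the argument.

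Third, with all critical points real and all critical values of modulus at least $1$, I would conclude that $C'$ is real. Since $\tilde{M}_{n}$ is monotone on each of the $n+1$ intervals cut out by $c_{1}<\dots<c_{n-1}$, and its successive extreme values alternate in sign with modulus $\ge1$ while it tends to $\pm\infty$ at the ends, a direct count of sign changes shows that each of $\tilde{M}_{n}=1$ and $\tilde{M}_{n}=-1$ has exactly $n$ real solutions; hence all $2n$ zeros of $\tilde{M}_{n}^{2}-1$ are real. Consequently $C'=\{x\in\R:-1\le\tilde{M}_{n}(x)\le1\}$ is a finite union of real intervals, one on each monotonic branch on which $\tilde{M}_{n}$ remains in $[-1,1]$, with endpoints merged where a critical value equals $\pm1$; this yields $\ell'$ disjoint intervals. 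The bound $\ell'\le n$ follows since a point of $(-1,1)$ has at most $n$ preimages under $\tilde{M}_{n}$ and each component of $C'$ contributes at least one, while $\ell'\ge1$ because $C\subseteq C'$ is nonempty.

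Finally, the minimality statement is short. For any compact $C''$ with $C\subseteq C''\subseteq C'$ we have $\max_{x\in C''}|\tilde{M}_{n}(x)|\le1$ since $\tilde{M}_{n}(C')\subseteq[-1,1]$, while $\max_{x\in C''}|\tilde{M}_{n}(x)|\ge\max_{x\in C}|\tilde{M}_{n}(x)|=1$; hence $\max_{x\in C''}|M_{n}(x)|=L_{n}$. On the other hand, any monic $Q\in\P_{n}$ satisfies $\max_{x\in C''}|Q(x)|\ge\max_{x\in C}|Q(x)|\ge L_{n}(C)=L_{n}$ by minimality of $M_{n}$ on $C$. Thus $M_{n}$ attains the least possible deviation $L_{n}$ on $C''$, i.e.\ it is the minimal polynomial of degree $n$ on $C''$, which completes the proof.
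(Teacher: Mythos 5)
Your proof is correct and follows essentially the same route as the paper: real coefficients via uniqueness, the alternation theorem, the interlacing of zeros and alternation points, and the key bound $|\tilde{M}_{n}(c_{i})|\geq1$ on the critical values, from which the realness and interval structure of $C'$ follow. Only a trivial slip --- the $n-1$ critical points cut $\R$ into $n$ monotone branches, not $n+1$ --- and your direct norm-monotonicity argument for the final assertion is a clean substitute for the paper's one-line appeal to the alternation theorem.
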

\begin{proof}
Since $C$ is a real set, the minimal polynomial $M_n$ has only real coeffcients. By the alternation theorem, there are $n+1$ alternation points $x_0,x_1,\ldots,x_n\in{C}$, $x_0<x_1<\ldots<x_n$, with $M_n(x_j)=(-1)^{n-j}L_n$, $j=0,1,\ldots,n$. Thus $M_n$ has $n$ simple zeros $\xi_1<\xi_2<\ldots<\xi_n$ and
\[
x_0<\xi_1<x_1<\xi_2<x_2<\ldots<x_{n-1}<\xi_n<x_n.
\]
Thus, in each interval $(\xi_j,\xi_{j+1})$, $j=1,2,\ldots,n-1$, there is exactly one point $y_j\in(\xi_j,\xi_{j+1})$ with $M_n'(y_j)=0$ and $|M_n(y_j)|\geq{L}_n$, which gives that $\tilde{M}_{n}^{-1}([-1,1])$ is a subset of $\R$ and consists of a finite number (at most $n$) real intervals. The last assertion is an immediate consequence of the alternation theorem.
\end{proof}


With the help of Theorem\,\ref{Thm-LnCap} and Lemma\,\ref{Lemma-MinPol}, we are able to establish the main result.


\begin{theorem}\label{Thm-LB}
Let $C\subset\R$ be compact and infinite, then, for each $n\in\N$,
\begin{equation}\label{IneqLnCap3}
L_{n}(C)\geq2\,(\CAP{C})^{n},
\end{equation}
where equality is attained if there exists a polynomial $P_{n}\in\P_{n}$ such that\\ $P_{n}^{-1}([-1,1])=C$.
\end{theorem}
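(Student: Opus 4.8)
The plan is to reduce the arbitrary compact set $C$ to the situation covered by Theorem~\ref{Thm-LnCap} by working with the minimal polynomial of $C$ itself. Let $M_n\in\P_n$ be the monic minimal polynomial of degree $n$ on $C$ and write $L_n=L_n(C)$ for its minimum deviation; since $C$ is infinite we have $L_n>0$. I would then normalize it, setting $\tilde M_n:=M_n/L_n$, which is a polynomial of degree exactly $n$ with nonzero leading coefficient $1/L_n$, so that Theorem~\ref{Thm-LnCap} applies to it.

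Next I would bring in Lemma~\ref{Lemma-MinPol}, which asserts that the enlarged set $C':=\tilde M_n^{-1}([-1,1])$ satisfies $C\subseteq C'\subset\R$ and that $M_n$ is again the minimal polynomial of degree $n$ on $C'$. The identity to extract here is $L_n(C')=L_n(C)$. Indeed, because $M_n$ is minimal on $C'$ we have $L_n(C')=\max_{z\in C'}|M_n(z)|$, and since every $z\in C'$ satisfies $\tilde M_n(z)\in[-1,1]$ by the very definition of $C'$, it follows that
\[
\max_{z\in C'}|M_n(z)|=L_n\,\max_{z\in C'}|\tilde M_n(z)|=L_n,
\]
the last equality holding because $|\tilde M_n|\le 1$ on $C'$ with the value $1$ attained. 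Hence $L_n(C')=L_n$.

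With this identity the inequality follows immediately. Applying Theorem~\ref{Thm-LnCap} to $\tilde M_n$, whose inverse image of $[-1,1]$ is by construction exactly $C'$, gives $L_n(C')=2\,(\CAP{C'})^n$. Since $C\subseteq C'$, the monotonicity of the logarithmic capacity \eqref{MonCap} yields $\CAP{C}\le\CAP{C'}$, and therefore
\[
L_n(C)=L_n(C')=2\,(\CAP{C'})^n\ge 2\,(\CAP{C})^n,
\]
which is precisely \eqref{IneqLnCap3}. For the equality statement, if $C=P_n^{-1}([-1,1])$ for some $P_n\in\P_n$, then $C$ is itself an inverse polynomial image of $[-1,1]$, so Theorem~\ref{Thm-LnCap} applies directly to $C$ and gives $L_n(C)=2\,(\CAP{C})^n$.

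Since the genuinely substantial content has already been packaged into Theorem~\ref{Thm-LnCap} and Lemma~\ref{Lemma-MinPol}, I do not expect a serious obstacle in this final argument; the only point requiring care is the verification that passing from $C$ to the larger set $C'$ leaves the minimum deviation unchanged, i.e.\ the identity $L_n(C')=L_n(C)$. Everything else is a matter of assembling the two earlier results together with the monotonicity property \eqref{MonCap}, and no further potential-theoretic input is needed.
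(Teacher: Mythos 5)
Your proposal is correct and follows exactly the paper's argument: pass to $C'=\tilde M_n^{-1}([-1,1])$ via Lemma~\ref{Lemma-MinPol}, observe $L_n(C')=L_n(C)$, apply Theorem~\ref{Thm-LnCap} to get $L_n(C')=2(\CAP{C'})^n$, and finish with the monotonicity \eqref{MonCap}. Your explicit verification of $L_n(C')=L_n(C)$ is a welcome elaboration of a step the paper leaves terse, but the route is the same.
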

\begin{proof}
Let $C'$ be defined as in Lemma\,\ref{Lemma-MinPol}, then
\begin{align*}
L_{n}(C)&=L_{n}(C')\qquad\text{by~Lemma\,\ref{Lemma-MinPol}}\\
&=2\,(\CAP{C'})^{n}\qquad\text{by~Lemma\,\ref{Lemma-MinPol}~and~Theorem\,\ref{Thm-LnCap}}\\
&\geq2\,(\CAP{C})^{n}\qquad\text{by~\eqref{Ca-B}~and~\eqref{MonCap}}
\end{align*}
\end{proof}


\begin{remark}
Note that for each $n\in\N$, the constant $2$ in inequality \eqref{IneqLnCap3} is the \emph{best possible} constant which is independent of the set $C$.
\end{remark}


Of special interest is of course if $C$ is the union of several real intervals. For this reason, let us consider the case of $\ell$ intervals, $\ell\in\{2,3,4,\ldots\}$,
\begin{equation}\label{E}
E:=\bigcup_{j=1}^{\ell}[a_{2j},a_{2j-1}],
\end{equation}
where $a_{1},a_{2},\dots,a_{2\ell}$ are fixed with
\begin{equation}\label{aj}
-1=:a_{2\ell}<a_{2\ell-1}<\ldots<a_{2}<a_{1}:=1.
\end{equation}
A.Yu.\,Solynin\,\cite{Solynin} derived a very accurate lower bound for the logarithmic capacity of $E$:


\begin{theorem}[\cite{Solynin}]\label{Thm-Solynin}
Let $E$ be given in \eqref{E} and let
\begin{equation}\label{phipsi}
\varphi_{j}:=\arccos(a_{2j-1}),\quad\psi_{j}:=\arccos(a_{2j}),\quad j=1,2,\ldots,\ell,
\end{equation}
and let $\gamma_{j}$ and $\delta_{j}$ be characterised by
\begin{gather}
\gamma_{1}=0,\quad\gamma_{\ell}=\pi,\quad\varphi_{j}\leq\gamma_{j}\leq\psi_{j},\quad~j=2,3,\ldots,\ell-1,\label{gamma}\\
\psi_{j-1}\leq\delta_{j}\leq\varphi_{j},\quad~j=2,3,\ldots,\ell.\label{delta}
\end{gather}
Then
\begin{equation}\label{IneqCap1}
\CAP{E}\geq\frac{1}{2}\prod_{j=1}^{\ell-1}\Bigl(\sin\tfrac{(\psi_{j}-\gamma_{j})\pi}{2(\delta_{j+1}-\gamma_{j})}\Bigr)
^{\frac{2(\delta_{j+1}-\gamma_{j})^{2}}{\pi^{2}}}\cdot\Bigl(\sin\tfrac{(\gamma_{j+1}-\varphi_{j+1})\pi}{2(\gamma_{j+1}-\delta_{j+1})}\Bigr)
^{\frac{2(\gamma_{j+1}-\delta_{j+1})^{2}}{\pi^{2}}}.
\end{equation}
\end{theorem}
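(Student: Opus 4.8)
The plan is to transfer the problem to the unit circle and there apply a separating (dissymmetrization) transformation of the kind developed by Dubinin and Solynin. Under the Joukowski substitution $x=\tfrac12(z+z^{-1})$ (equivalently $x=\cos\theta$, $z=e^{i\theta}$) the complement $\overline{\C}\setminus E$ corresponds to a domain on the $z$-sphere that is invariant under $z\mapsto\bar z$ and $z\mapsto z^{-1}$; each interval $[a_{2j},a_{2j-1}]$ becomes a pair of arcs carrying the angles $\theta\in[\varphi_j,\psi_j]$ and $\theta\in[-\psi_j,-\varphi_j]$, with $\varphi_j,\psi_j$ as in \eqref{phipsi}, $\varphi_1=0$, $\psi_\ell=\pi$, while the gaps of $E$ become the angular intervals $(\psi_j,\varphi_{j+1})$. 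A standard capacity relation for this $2$-to-$1$ map expresses $\CAP E$ through the logarithmic capacity (reduced modulus at $\infty$) of the symmetric circular configuration, and turns a lower bound for the latter into \eqref{IneqCap1}.

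The one explicit ingredient is the symmetric two-interval set: from the Example, $\CAP E_\alpha=\tfrac12\sqrt{1-\alpha^2}=\tfrac12\sin(\arccos\alpha)$ for $E_\alpha=[-1,-\alpha]\cup[\alpha,1]$. Reading $\alpha=\cos\psi_1$, the single central gap costs the factor $\sin\psi_1$, which splits symmetrically as $(\sin\psi_1)^{1/2}\cdot(\sin\psi_1)^{1/2}$, one square root per edge of the gap. This is precisely the shape of a single factor in \eqref{IneqCap1}, and it already exhibits the two features to be explained in general: the $\sin$ of an angular fraction, and a fractional weight.

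With this in hand I would set up the decomposition suggested by the free parameters. The points $\delta_j\in[\psi_{j-1},\varphi_j]$ from \eqref{delta} cut the gaps and the points $\gamma_j\in[\varphi_j,\psi_j]$ from \eqref{gamma} cut the arcs, so that $0=\gamma_1\le\delta_2\le\gamma_2\le\delta_3\le\cdots\le\gamma_\ell=\pi$ partitions $[0,\pi]$ into cells, each cell $[\gamma_j,\delta_{j+1}]$ or $[\delta_{j+1},\gamma_{j+1}]$ straddling exactly one arc-to-gap transition. Cutting the circular domain along the rays $\arg z=\pm\gamma_j,\pm\delta_j$, the separating transformation bounds $\CAP E$ from below by $\CAP[-1,1]=\tfrac12$ times one explicit reduction factor per cell, each factor measuring the loss of capacity caused by the gap cut into that cell. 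On a cell of angular half-width $w$ the power map $z\mapsto z^{\pi/(2w)}$ normalises the opening and reduces the piece to a one-interval condenser; its reduction factor is $\sin\bigl(\tfrac{\pi}{2}\cdot\tfrac{\text{arc length}}{w}\bigr)$ raised to the weight $2w^2/\pi^2$, where both the scaling $\tfrac{\pi}{2w}$ inside the sine and the exponent come from the power map. Inserting $w=\delta_{j+1}-\gamma_j$ with arc length $\psi_j-\gamma_j$, and $w=\gamma_{j+1}-\delta_{j+1}$ with arc length $\gamma_{j+1}-\varphi_{j+1}$, reproduces the two factors in \eqref{IneqCap1}; validity for every admissible $\gamma_j,\delta_j$ reflects the free choice of separating rays, and taking them at the midpoints in the symmetric case recovers equality, as the $\ell=2$ check above shows.

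The main obstacle is the separating-transformation inequality itself. One must prove that replacing each angular cell by its circularly symmetric model does not decrease $\CAP E$ — equivalently, that the reduced modulus of the complementary domain does not increase under this dissymmetrization — and that the separated pieces recombine with precisely the quadratic angular weights $2w^2/\pi^2$. Establishing this monotonicity, rather than the elementary two-interval computation or the angular bookkeeping, is where the real work lies; it is exactly the content supplied by Solynin's polarization/dissymmetrization machinery, and it is the step I expect to be hardest to make rigorous.
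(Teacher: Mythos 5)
This statement is quoted from Solynin's paper \cite{Solynin}; the present paper offers no proof of it, so there is no in-paper argument to measure yours against. That said, your sketch does correctly reconstruct the architecture of Solynin's actual argument: the transfer to the unit circle via $x=\cos\theta$, the identification of the arcs $[\varphi_j,\psi_j]$ and gaps $(\psi_j,\varphi_{j+1})$, the role of the free parameters in \eqref{gamma} and \eqref{delta} as cutting rays $\arg z=\pm\gamma_j,\pm\delta_j$, the normalization of each cell by a power map, and the quadratic weights. Your consistency check on $E_\alpha=[-1,-\alpha]\cup[\alpha,1]$ with $\delta_2=\pi/2$ is also correct: both factors reduce to $(\sin\psi_1)^{1/2}$ and the bound becomes $\tfrac12\sqrt{1-\alpha^2}=\CAP E_\alpha$, so equality holds there.

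As a proof, however, the proposal has a genuine gap, and you name it yourself in the last paragraph: the separating-transformation inequality --- that cutting the complementary domain along the chosen rays and replacing each angular cell by its one-gap circular model does not increase the reduced modulus at infinity (equivalently, does not decrease $\CAP E$), and that the separated pieces recombine with precisely the exponents $2(\delta_{j+1}-\gamma_j)^2/\pi^2$ and $2(\gamma_{j+1}-\delta_{j+1})^2/\pi^2$ --- is asserted rather than proved. That inequality \emph{is} the theorem; the Joukowski transfer, the two-interval computation, and the angular bookkeeping are all routine by comparison. Invoking ``Solynin's polarization/dissymmetrization machinery'' to supply it is not a proof of Solynin's result but a pointer back to \cite{Solynin}. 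To close the gap you would need to state and prove (or at least precisely quote, with hypotheses verified for this configuration) the separation theorem for reduced moduli of condensers under an angular decomposition, including the composition law for reduced moduli that yields the quadratic weights; without that, what you have is an accurate reading of how the formula \eqref{IneqCap1} is assembled, not a derivation of the inequality.
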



\begin{remark}\hfill{}
\begin{enumerate}
\item Note that the $\gamma_{j}$ and $\delta_{j}$ may be chosen arbitrarily subject to the inequalities \eqref{gamma} and \eqref{delta}. In order to get the best lower bound, one has to maximize the right hand side of \eqref{IneqCap1} over $\gamma_{j},\delta_{j}$, providing \eqref{gamma} and \eqref{delta}.
\item A possible choice for $\gamma_{j}$ and $\delta_{j}$ is
\begin{equation}
\begin{aligned}
\gamma_{j}&:=\tfrac{1}{2}(\varphi_{j}+\psi_{j}),\qquad j=2,3,\ldots,\ell-1,\\
\delta_{j}&:=\tfrac{1}{2}(\varphi_{j}+\psi_{j-1}),\qquad
j=2,3,\ldots,\ell,
\end{aligned}
\end{equation}
for which equality is attained in \eqref{IneqCap1} if $E$ has a special form (see \cite{Solynin}), and for which inequality \eqref{IneqCap1} reads as
\begin{equation}\label{IneqCap2}
\begin{aligned}
\CAP{E}\geq&\frac{1}{2}\prod_{j=1}^{\ell-1}
\Bigl(\sin\tfrac{(\psi_{j}-\varphi_{j})\pi}{2(\varphi_{j+1}-\varphi_{j})}\Bigr)
^{\frac{(\varphi_{j+1}-\varphi_{j})^{2}}{2\pi^{2}}}
\cdot\prod_{j=1}^{\ell-2}
\Bigl(\sin\tfrac{(\psi_{j+1}-\varphi_{j+1})\pi}{2(\psi_{j+1}-\psi_{j})}\Bigr)
^{\frac{(\psi_{j+1}-\psi_{j})^{2}}{2\pi^{2}}}\\
&\qquad\cdot\Bigl(\sin\tfrac{(\pi-\varphi_{\ell})\pi}{2\pi-\varphi_{\ell}+\psi_{\ell-1}}\Bigr)
^{\frac{(2\pi-\varphi_{\ell}+\psi_{\ell-1})^{2}}{2\pi^{2}}}
\end{aligned}
\end{equation}
\item In the simplest case of $\ell=2$ intervals, the capacity $\CAP{E}$ may be computed with the help of Jacobi's elliptic and theta functions, see \cite{Achieser-1930}. Numerical computations show that the lower bound \eqref{IneqCap1} (for optimal $\delta_{2}$) is very accurate and even the lower bound \eqref{IneqCap2} is very good. An upper bound in terms of elementary functions for the logarithmic capacity of two intervals is given by the author in \cite{Sch-2008}.
\item For the representation of the logarithmic capacity of three intervals with the help of theta functions, see \cite{FallieroSebbar-1999} and \cite{FallieroSebbar-2001}.
\item In \cite{PehSch-1999-1} and \cite{PehSch-1999-2}, polynomials $P_{n}\in\P_{n}$, for which $P_{n}^{-1}([-1,1])$ is the union of several intervals, are characterised by a polynomial system for the extremal points of $P_{n}$, see also \cite{Sch-2007}. With this polynomial system, we numerically computed inverse polynomial images $A$ consisting of $\ell=3,4$ and $5$ intervals, the minimum deviation $L_{n}(A)$ and, by \eqref{LnCap}, the capacity $\CAP{A}$, and compared $\CAP{A}$ with the lower bound \eqref{IneqCap1}. All numerical computations gave a relative error of less than $2\%$, which emphasizes the quality of the lower bound \eqref{IneqCap1}.
\item With inequality \eqref{IneqLnCap3}, together with \eqref{IneqCap1} or \eqref{IneqCap2}, one has an excellent lower bound for the minimum deviation of degree $n$ on $E$ in terms of elementary functions of the endpoints $a_{j}$.
\item A very good review of minimal polynomials on several intervals can be found in \cite{Peh-1997}.
\item Let us consider the set of all $E$ consisting of $\ell$ intervals, which are the inverse polynomial image of $[-1,1]$, i.e., for which there exists a polynomial $P_{n}\in\P_{n}$ such that $P_{n}^{-1}([-1,1])=E$ holds. Peherstorfer\,\cite{Peh-2001}, Totik\,\cite{Totik-2001} and Bogatyr{\"e}v\,\cite{Bogatyrev} proved that this set is \emph{dense} in the set of all $E$ consisting of $\ell$ intervals.
\item Let us take a brief look at how the inequality of Theorem\,\ref{Thm-LB} corresponds to the results of Achieser\,\cite{Achieser-1932},\,\cite{Achieser-1933} in the case of two intervals, i.e., $\ell=2$. For the sequence
\begin{equation}\label{Sequence}
\Bigl(\frac{L_{k}(E)}{(\CAP{E})^k}\Bigr)_{k=1}^{\infty}
\end{equation}
he proved the following:
\begin{enumerate}
\item If there exists a polynomial $P_{n}\in\P_{n}$, such that $P_{n}^{-1}([-1,1])=E$, then the sequence \eqref{Sequence} has a finite number of accumulation points from which the smallest one is equal to $2$.
\item  If there is no polynomial $P_{n}\in\P_{n}$, such that $P_{n}^{-1}([-1,1])=E$, then the accumulation points of the sequence \eqref{Sequence} fill out an entire interval of which the left bound is equal to $2$.
\end{enumerate}
\end{enumerate}
\end{remark}


Deriving an upper bound for $L_n(E)$ is more complicated and has been achieved recently by Totik in \cite{Totik-2009}. Since we give an analogous result for several arcs of the unit circle (see Theorem\,\ref{Thm-UB}), we state here the result.


\begin{theorem}[Totik\,\cite{Totik-2009}]\label{Thm-Totik}
Let $E$ be given in \eqref{E}, then there exists a constant $K$ depending only on $E$ such that for each $n\in\N$
\begin{equation}\label{UpperBound-Totik}
L_{n}(E)\leq{K}(\CAP{E})^{n}.
\end{equation}
\end{theorem}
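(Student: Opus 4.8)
The goal is to produce, for every $n$, a monic polynomial $Q_{n}\in\P_{n}$ with $\|Q_{n}\|_{E}:=\max_{z\in{E}}|Q_{n}(z)|\le{K}\,(\CAP{E})^{n}$; since $L_{n}(E)\le\|Q_{n}\|_{E}$, this yields \eqref{UpperBound-Totik}. The natural idea is to approximate $E$ from outside by an inverse polynomial image and to invoke the exact formula of Theorem\,\ref{Thm-LnCap}. Concretely, I would first seek a polynomial $P=P_{m}$ of degree $m=m_{n}\le{n}$ with $E\subseteq{A}_{n}:=P^{-1}([-1,1])$ (a union of at most $\ell$ intervals, as in Lemma\,\ref{Lemma-MinPol}) and with $\CAP{A}_{n}\le(1+c_{0}/n)\,\CAP{E}$ for a constant $c_{0}=c_{0}(E)$. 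Such preimages become dense (Remark, item\,8), and one has the quantitative handle that the admissible $\ell$-interval preimages of degree $m$ are exactly those whose components carry harmonic measures with common denominator $m$.

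Granting such an $A_{n}$, write $P(z)=c_{m}z^{m}+\ldots$; by Theorem\,\ref{Thm-LnCap} the monic polynomial $\tfrac{2}{(2c_{m})^{k}}T_{k}(P(z))$ of degree $km$ is extremal on $A_{n}$ with norm $2\,(\CAP{A}_{n})^{km}$. For a general degree $n=km+r$, $0\le{r}<m$, I would set $Q_{n}:=\tfrac{2}{(2c_{m})^{k}}T_{k}(P)\cdot{S}_{r}$, where $S_{r}$ is a monic factor of degree $r$ small on $A_{n}$, say $\|S_{r}\|_{A_{n}}\le{C}_{1}(\CAP{A}_{n})^{r}$ with $C_{1}=C_{1}(E)$. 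Then $Q_{n}$ is monic of degree $n$ and
\begin{equation*}
L_{n}(E)\le\|Q_{n}\|_{E}\le\|Q_{n}\|_{A_{n}}\le2\,C_{1}\,(\CAP{A}_{n})^{n}\le2\,C_{1}\,(1+c_{0}/n)^{n}(\CAP{E})^{n}\le{K}\,(\CAP{E})^{n},
\end{equation*}
since $(1+c_{0}/n)^{n}\le{e}^{c_{0}}$, giving $K=2C_{1}e^{c_{0}}$.

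The two steps carrying all the difficulty, and that I expect to be the main obstacle, are the quantitative approximation $\CAP{A}_{n}\le(1+O(1/n))\,\CAP{E}$ with $m_{n}\le{n}$, and the uniform correction bound for $S_{r}$. For $\ell=2$ these are benign: Dirichlet's theorem matches the single harmonic measure to a rational of denominator $m\le{n}$ within $O(1/n)$, and the capacity of two intervals depends real-analytically (hence Lipschitz) on the endpoints, so the $O(1/n)$ movement costs only a factor $1+O(1/n)$; this recovers the boundedness of \eqref{Sequence} found by Achieser (Remark, item\,9). For $\ell\ge3$, however, simultaneous rational approximation of the $\ell-1$ harmonic measures with denominator $m\le{n}$ forces an error of order only $n^{-1/(\ell-1)}$, whence $(\CAP{A}_{n}/\CAP{E})^{n}$ need not stay bounded; thus the naive density argument fails and a finer construction is required.

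To overcome this I would argue potential-theoretically. Let $\mu$ be the equilibrium measure of $E$ and $g$ the Green's function of $\overline{\C}\setminus{E}$ with pole at $\infty$, so a monic $Q_{n}=\prod(z-z_{j})$ satisfies $\tfrac1n\log|Q_{n}(z)|-\log\CAP{E}=\int\log|z-t|\,d(\nu_{n}-\mu)(t)+g(z)$ with $\nu_{n}=\tfrac1n\sum\delta_{z_{j}}$. Since $g\equiv0$ on $E$, everything reduces to choosing the zeros $z_{j}$ so that the logarithmic potential of $\nu_{n}-\mu$ is bounded by $O(1/n)$ \emph{uniformly} for $z\in{E}$; exponentiating then gives $\|Q_{n}\|_{E}\le{e}^{O(1)}(\CAP{E})^{n}$. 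The hard part is exactly this uniform $O(1/n)$ control: the density of $\mu$ blows up like the reciprocal square root of the distance to each endpoint, and the discretization must be engineered (with appropriately clustered nodes near the endpoints and correct mass in each band, independently of the arithmetic nature of the harmonic measures) so that the potential error does not merely tend to $0$ but stays $O(1/n)$ — a naive equidistribution yields only $O((\log n)/n)$, i.e. a factor $n^{O(1)}$, which is not good enough. Carrying out this construction, with the matching endpoint and gap estimates for the potential, is the technical heart of the theorem and the step I expect to be genuinely delicate.
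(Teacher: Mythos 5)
The paper does not actually prove Theorem~\ref{Thm-Totik}: it is quoted verbatim from Totik~\cite{Totik-2009} precisely because the argument is substantial, so there is no in-paper proof to measure you against. Judged on its own terms, your proposal correctly identifies the viable strategy --- and your diagnosis of why the naive route fails is sound: outer approximation of $E$ by inverse polynomial images $P_m^{-1}([-1,1])$ with $m\le n$ requires simultaneous rational approximation of the $\ell-1$ independent harmonic measures with a common denominator, and for $\ell\ge3$ this does not in general deliver the $O(1/n)$ endpoint accuracy needed to keep $(\CAP{A_n}/\CAP{E})^{n}$ bounded. The potential-theoretic alternative you then propose (choose the $n$ zeros so that the logarithmic potential of $\nu_n-\mu$ is bounded above by $O(1/n)$ uniformly on $E$, then exponentiate using $g\equiv0$ on $E$) is indeed the method by which the cited result is established. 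There is also a smaller flaw in your fallback argument worth flagging: the monic correction factor $S_r$ with $\|S_r\|_{A_n}\le C_1(\CAP{A_n})^{r}$, uniformly in $r<m\le n$, is itself an instance of the theorem being proved, since Theorem~\ref{Thm-LnCap} controls $L_s(A_n)$ only when $s$ is a multiple of $m=\deg P$; so that step is circular as stated.

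The genuine gap is that the proposal defers exactly the step in which the entire content of the theorem resides. Everything you actually verify --- the reduction $L_n(E)\le\|Q_n\|_E$, the identity $\tfrac1n\log|Q_n(z)|-\log\CAP{E}=\int\log|z-t|\,d(\nu_n-\mu)(t)$ for $z\in E$, the exponentiation --- is routine bookkeeping. What is \emph{not} supplied is the existence, for every $n$, of points $z_1,\dots,z_n$ with $\sup_{z\in E}\bigl(\sum_{j}\log|z-z_j|-n\int\log|z-t|\,d\mu(t)\bigr)\le C(E)$. This requires (a) splitting $n$ among the $\ell$ bands so that each carries mass within $O(1/n)$ of $n\mu(E_j)$ without disturbing the potential, (b) a second-order (center-of-mass) cancellation in each cell of equal $\mu$-measure to upgrade the naive $O((\log n)/n)$ to $O(1/n)$, and (c) a separate analysis in the cells adjacent to the endpoints, where the density of $\mu$ has an inverse-square-root singularity and the cell diameters are of order $n^{-2}$ rather than $n^{-1}$. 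You name these difficulties accurately but do not resolve any of them, so the proposal is a correct road map rather than a proof; as it stands the conclusion $\|Q_n\|_E\le e^{O(1)}(\CAP{E})^n$ rests on an unproved assertion.
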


\section{Compact Subsets of the Unit Circle}


In this section, we give analogous bounds for the minimum deviation of the minimal polynomial on a compact subset of the unit circle, lying symmetrically with respect to the real axis.\\
Let $C\subseteq[-1,1]$ be a compact infinite set and let
\begin{equation}\label{Gamma}
\Gamma:=\bigl\{z\in\C:|z|=1,\re{z}\in{C}\bigr\},
\end{equation}
i.e., the set $C$ is the projection of $\Gamma$ onto the real axis. Consider the mapping $x=\tfrac{1}{2}(z+\tfrac{1}{z})$, $z\in\C$, $|z|=1$, $x\in[-1,1]$, for which
\begin{equation}\label{Tn}
T_{n}(x)=\frac{1}{2}\bigl(z^n+\frac{1}{z^n}\bigr)=\re(z^{n})
\end{equation}
holds for the classical Chebyshev polynomial $T_{n}$. Obviously
\begin{equation}\label{CiffGamma}
x\in{C} \iff z\in\Gamma.
\end{equation}
First, let us give a result of Robinson\,\cite{Robinson} concerning an identity between the capacities of $C$ and $\Gamma$.


\begin{theorem}[Robinson\,\cite{Robinson}]\label{Thm-Robinson}
Let $C$ and $\Gamma$ as above, then
\begin{equation}
\CAP\Gamma=\sqrt{2\,\CAP{C}}.
\end{equation}
\end{theorem}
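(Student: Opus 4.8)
The plan is to abandon Chebyshev polynomials for the moment and argue entirely with equilibrium measures, exploiting that the Joukowski map $\Phi(z):=\tfrac12(z+\tfrac1z)$ satisfies $\Phi(\Gamma)=C$ and is invariant under $z\mapsto 1/z$. Write $\mu_\Gamma,\mu_C$ for the equilibrium measures of $\Gamma$ and $C$, write $U^{\mu}(z):=\int\log\frac{1}{|z-\zeta|}\,d\mu(\zeta)$ for the logarithmic potential, and recall the two standard facts from potential theory (see \cite{Kirsch} and the references therein): the energy $I(\mu):=\iint\log\frac{1}{|z-\zeta|}\,d\mu(z)\,d\mu(\zeta)$ of the equilibrium measure of a compact set $K$ equals the Robin constant $-\log\CAP{K}$ and is minimal among all probability measures supported on $K$. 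Since $\Gamma$ is symmetric with respect to the real axis, uniqueness of the equilibrium measure forces $\mu_\Gamma$ to be invariant under complex conjugation; this symmetry will be used repeatedly.

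The crux is an elementary identity on the unit circle. For $|z|=|w|=1$ one computes
\begin{equation*}
\Phi(z)-\Phi(w)=\frac{(z-w)(zw-1)}{2zw},\qquad |zw-1|=|z-\bar w|,
\end{equation*}
so that $|\Phi(z)-\Phi(w)|=\tfrac12\,|z-w|\,|z-\bar w|$ and hence
\begin{equation*}
\log\frac{1}{|\Phi(z)-\Phi(w)|}=\log 2+\log\frac{1}{|z-w|}+\log\frac{1}{|z-\bar w|}.
\end{equation*}
Integrating this against a conjugation-symmetric probability measure $\nu$ on $\Gamma$ in both variables, the last term collapses to $I(\nu)$ after the substitution $w\mapsto\bar w$, yielding the transfer formula $\iint\log\frac{1}{|\Phi(z)-\Phi(w)|}\,d\nu(z)\,d\nu(w)=\log 2+2\,I(\nu)$. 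I would then apply this twice. Taking $\nu=\mu_\Gamma$, the left-hand side equals $I(\Phi_*\mu_\Gamma)$; since $\Phi_*\mu_\Gamma$ is a probability measure on $C$, minimality gives $\log 2+2I(\mu_\Gamma)=I(\Phi_*\mu_\Gamma)\ge I(\mu_C)$, i.e. $\CAP\Gamma\le\sqrt{2\,\CAP C}$. For the reverse inequality I would lift $\mu_C$ to the symmetric measure $\nu'$ on $\Gamma$ that splits the mass of each $x\in C$ equally between its two preimages $z,\bar z$; then $\Phi_*\nu'=\mu_C$, the left-hand side equals $I(\mu_C)=-\log\CAP C$, which forces $I(\nu')=-\tfrac12\log(2\,\CAP C)$, and minimality of $I(\mu_\Gamma)$ gives $\CAP\Gamma\ge\sqrt{2\,\CAP C}$. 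The two inequalities together prove the claim. (Equivalently, one verifies by Frostman's theorem that $\Phi_*\mu_\Gamma=\mu_C$, since for $z_0\in\Gamma$ the identity shows $U^{\Phi_*\mu_\Gamma}(\Phi(z_0))=\log 2+U^{\mu_\Gamma}(z_0)+U^{\mu_\Gamma}(\bar z_0)=\log 2-2\log\CAP\Gamma$ is constant on $C$.)

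The hard part will not be the algebra but the justification of the two potential-theoretic reductions: that $\mu_\Gamma$ is genuinely conjugation-invariant, so that the cross term $\iint\log\frac{1}{|z-\bar w|}\,d\mu_\Gamma\,d\mu_\Gamma$ really equals $I(\mu_\Gamma)$; and that the lifted measure $\nu'$ has finite energy and projects correctly in spite of the square-root ramification of $\Phi$ at $z=\pm1$. Both are handled by standard arguments, since the ramification points carry no mass and form a set of capacity zero and all potentials in sight equal the relevant Robin constant quasi-everywhere, but this is where the care is needed. I note finally that the single inequality $\CAP\Gamma\le\sqrt{2\,\CAP C}$ can also be obtained in the spirit of the earlier sections: using $T_n(\Phi(z))=\tfrac12(z^n+z^{-n})$, the monic polynomial $2^n z^n M_n(\Phi(z))$ of degree $2n$ has sup-norm $2^n L_n(C)$ on $\Gamma$, whence $L_{2n}(\Gamma)\le 2^n L_n(C)$ and, taking $2n$-th roots and letting $n\to\infty$, $\CAP\Gamma\le\sqrt{2\,\CAP C}$.
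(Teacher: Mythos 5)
Your argument is correct; note that the paper offers no proof of this statement at all --- it is imported verbatim from Robinson's paper --- so there is no internal argument to compare against, and what you have written stands as a legitimate self-contained derivation. The engine of the proof, the identity $|\Phi(z)-\Phi(w)|=\tfrac12\,|z-w|\,|z-\bar w|$ for $|z|=|w|=1$, is exactly the right one, and both directions are handled cleanly: the transfer formula $I(\Phi_*\nu)=\log 2+2I(\nu)$ for conjugation-symmetric $\nu$ converts between the two energies, the pushforward $\Phi_*\mu_\Gamma$ competes against $\mu_C$ for $\CAP\Gamma\le\sqrt{2\,\CAP C}$, and the symmetric lift of $\mu_C$ competes against $\mu_\Gamma$ for the reverse. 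The points you flag as delicate are the right ones and are indeed unproblematic: conjugation-invariance of $\mu_\Gamma$ follows from uniqueness of the equilibrium measure of the conjugation-invariant set $\Gamma$, and the ramification points $\pm1$ carry no mass because a finite-energy measure has no atoms. Two small additions would make this airtight. First, the degenerate case $\CAP C=0$ is not excluded by the hypotheses (an infinite compact set can have capacity zero) and should be dispatched separately; your transfer formula, applied to the symmetrization $\tfrac12(\nu+\bar\nu)$ of an arbitrary finite-energy measure on $\Gamma$ and to the lift of one on $C$, shows $\CAP\Gamma=0\iff\CAP C=0$, so the identity holds trivially there. Second, be aware that your closing polynomial argument (which is essentially the construction the paper uses later in the proof of Theorem~\ref{Thm-UB}) yields only the inequality $\CAP\Gamma\le\sqrt{2\,\CAP C}$; it is a good consistency check but cannot replace the measure-theoretic argument for the reverse inequality, so the potential-theoretic half of your proof is not optional.
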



\begin{theorem}
Let $C$ and $\Gamma$ as above and let $n\in\N$. Let $b_{n}:=1$, let
\[
P_{n}(z):=b_{n}z^n+b_{n-1}z^{n-1}+\ldots+b_1z+b_0
\]
be the minimal polynomial of degree $n$ on $\Gamma$ and let $k^*\in\{0,1,\ldots,n\}$ be defined by $k^*:=\min\{k:b_{k}\neq0\}$. If $k^*\neq{n}$, i.e., if $P_{n}(z)\neq{z}^n$, then
\begin{equation}
L_{n}(\Gamma)\geq\sqrt{2|b_{k^*}|}\,(\CAP\Gamma)^{n-k^*}.
\end{equation}
\end{theorem}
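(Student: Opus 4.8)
The plan is to transfer the extremal problem from $\Gamma$ to its real projection $C$ through the map $x=\tfrac12(z+1/z)$ and then invoke Theorem~\ref{Thm-LB}. First I would exploit the symmetry of $\Gamma$ with respect to the real axis: since $C\subset\R$ we have $\Gamma=\overline{\Gamma}$, and because the Chebyshev polynomial of a compact infinite set is unique, $P_n$ must coincide with $z\mapsto\overline{P_n(\overline z)}$, so all $b_k$ are \emph{real}. Setting $m:=n-k^{*}\ge1$ (this is exactly where the hypothesis $P_n\neq z^n$ enters), I factor $P_n(z)=z^{k^{*}}\hat P(z)$ with $\hat P(z)=\sum_{k=0}^{m}b_{k^{*}+k}z^{k}$ and $\hat P(0)=b_{k^{*}}\neq0$; since $|z^{k^{*}}|=1$ on $\Gamma$, one has $|P_n(z)|=|\hat P(z)|$ there, hence $L_n(\Gamma)=\max_{z\in\Gamma}|\hat P(z)|$.

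The key step is to realise $|P_n(z)|^2$ as an honest polynomial in $x$. For $|z|=1$ and real coefficients, $\overline{P_n(z)}=P_n(1/z)$, so $S(z):=P_n(z)\,P_n(1/z)=|P_n(z)|^2\ge0$ is a Laurent polynomial invariant under $z\mapsto1/z$; using $z^{j}+z^{-j}=2T_j(x)$ it becomes a polynomial $S(x)$ of degree $m$ in $x=\tfrac12(z+1/z)$. Its leading coefficient comes from the top Laurent term $b_{k^{*}}(z^{m}+z^{-m})=2b_{k^{*}}T_m(x)$: as $T_m$ has leading coefficient $2^{m-1}$, the leading coefficient of $S(x)$ is $2^{m}b_{k^{*}}$. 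By the correspondence \eqref{CiffGamma}, as $z$ runs through $\Gamma$ the variable $x$ runs through $C$, whence $\max_{x\in C}S(x)=\max_{z\in\Gamma}|P_n(z)|^2=L_n(\Gamma)^2$.

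Finally I normalise $\tilde S:=S/(2^{m}b_{k^{*}})$, which is monic of degree $m$. Since $S\ge0$ on $C$,
\[
\max_{x\in C}|\tilde S(x)|=\frac{L_n(\Gamma)^2}{2^{m}|b_{k^{*}}|}\ge L_m(C)\ge 2\,(\CAP{C})^{m},
\]
the first inequality being the definition of the minimum deviation and the second Theorem~\ref{Thm-LB}. Hence $L_n(\Gamma)^2\ge 2^{m+1}|b_{k^{*}}|\,(\CAP{C})^{m}$, and substituting Robinson's identity (Theorem~\ref{Thm-Robinson}) in the form $\CAP{C}=\tfrac12(\CAP\Gamma)^2$ turns this into $L_n(\Gamma)^2\ge 2\,|b_{k^{*}}|\,(\CAP\Gamma)^{2m}$. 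Taking square roots yields $L_n(\Gamma)\ge\sqrt{2|b_{k^{*}}|}\,(\CAP\Gamma)^{n-k^{*}}$, as claimed.

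The main obstacle is precisely the reduction to a real polynomial expressible in $x$ alone: a minimal polynomial with genuinely complex coefficients would make $|P_n(e^{i\theta})|^2$ carry $\sin$-terms and thus fail to be a polynomial in $x=\cos\theta$, so the whole scheme rests on combining the uniqueness of the complex Chebyshev polynomial with the conjugation symmetry of $\Gamma$. Once that reduction is secured, the remaining points—the exact degree and leading coefficient of $S(x)$ and the nonnegativity that lets one drop the absolute value—are routine.
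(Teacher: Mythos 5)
Your proposal is correct and follows essentially the same route as the paper: realness of the coefficients from the conjugation symmetry of $\Gamma$, the expansion of $|P_n(z)|^2$ as a polynomial of exact degree $n-k^*$ in $x$ with leading coefficient $2^{n-k^*}b_{k^*}$ (the paper writes this directly via the autocorrelation coefficients $A_\ell=\sum_k b_kb_{k+\ell}$ rather than through $P_n(z)P_n(1/z)$), the comparison with $L_{n-k^*}(C)$, Theorem\,\ref{Thm-LB}, and Robinson's identity. The differences are purely presentational.
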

\begin{proof}
Since $\Gamma$ is symmetric with respect to the real axis, the coefficients $b_k$ have to be real. Then the square modulus of $P_n(z)$ may be written in the form ($z\in\Gamma$, $x\in{C}$)
\[
|P_n(z)|^2=A_0+2\sum_{\ell=1}^{n}A_{\ell}T_{\ell}(x)\qquad\text{where}\quad
A_{\ell}=\sum_{k=0}^{n-\ell}b_{k}b_{k+\ell}.
\]
By the definition of $k^*$,
\[
|P_n(z)|^2=A_0+2\sum_{\ell=1}^{n-k^*}A_{\ell}T_{\ell}(x)
\]
and $A_{n-k^*}=b_{n}b_{k^*}=b_{k^*}\neq0$. Thus
\begin{align*}
L_{n}(\Gamma)&=\max_{z\in\Gamma}|P_{n}(z)|\\
&=\max_{x\in{C}}\sqrt{\left|A_0+2\sum_{\ell=1}^{n-k^*}A_{\ell}T_{\ell}(x)\right|}\\
&=\sqrt{\max_{x\in{C}}\left|A_0+2\sum_{\ell=1}^{n-k^*}A_{\ell}T_{\ell}(x)\right|}\\
&\geq\sqrt{2^{n-k^*}|b_{k^*}|L_{n-k^*}(C)}\\
&\geq\sqrt{2^{n-k^*+1}|b_{k^*}|(\CAP{C})^{n-k^*}}\qquad\text{by~Theorem\,\ref{Thm-LB}}\\
&=\sqrt{2|b_{k^*}|}\,(\CAP\Gamma)^{n-k^*}\qquad\text{by~Theorem\,\ref{Thm-Robinson}}
\end{align*}
\end{proof}


\begin{theorem}\label{Thm-UB}
Let $E$ be given in \eqref{E} and define $\Lambda:=\{z\in\C:|z|=1,\re{z}\in{E}\}$, i.e., the set $E$ is the projection of $\Lambda$ onto the real axis. Then there exists a constant $B$ depending only on $\Lambda$ such that for each $n\in\N$
\begin{equation}
L_{n}(\Lambda)\leq{B}\,(\CAP\Lambda)^{n}.
\end{equation}
\end{theorem}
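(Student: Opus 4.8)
The plan is to transplant Totik's upper bound on the interval set $E$ (Theorem~\ref{Thm-Totik}) to the arc set $\Lambda$ by lifting the minimal polynomial on $E$ through the Joukowski-type map $x=\tfrac12(z+\tfrac1z)$, and then to pass from $\CAP E$ to $\CAP\Lambda$ via Robinson's identity (Theorem~\ref{Thm-Robinson}), which here reads $\CAP\Lambda=\sqrt{2\,\CAP E}$. The governing relation is $T_{k}\bigl(\tfrac12(z+\tfrac1z)\bigr)=\tfrac12(z^{k}+z^{-k})$ for $|z|=1$, already recorded in \eqref{Tn}; it turns a polynomial in $x$ into a Laurent polynomial in $z$ that a factor $z^{\deg}$ converts into an honest polynomial.

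Concretely, I would fix $m\in\N$ and let $M_{m}\in\P_{m}$ be the (monic, real) minimal polynomial of degree $m$ on $E$, so that $\max_{x\in E}|M_{m}(x)|=L_{m}(E)$. Expanding $M_{m}=\sum_{k=0}^{m}c_{k}T_{k}$ in the Chebyshev basis, monicity forces $c_{m}=2^{1-m}$. I then set
\[
R(z):=2^{m}z^{m}M_{m}\bigl(\tfrac12(z+\tfrac1z)\bigr)=2^{m-1}\sum_{k=0}^{m}c_{k}\bigl(z^{m+k}+z^{m-k}\bigr).
\]
All exponents $m\pm k$ lie between $0$ and $2m$, so $R$ is a genuine polynomial in $z$; its leading coefficient is $2^{m-1}c_{m}=1$, whence $R\in\P_{2m}$ is monic. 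On $\Lambda$ we have $|z|=1$ and $\re z\in E$, so $|R(z)|=2^{m}|M_{m}(\re z)|\le 2^{m}L_{m}(E)$. Since $R$ is admissible in the definition of $L_{2m}(\Lambda)$, this gives $L_{2m}(\Lambda)\le 2^{m}L_{m}(E)$. Combining with Theorem~\ref{Thm-Totik} and Theorem~\ref{Thm-Robinson},
\[
L_{2m}(\Lambda)\le 2^{m}L_{m}(E)\le 2^{m}K(\CAP E)^{m}=K(2\,\CAP E)^{m}=K(\CAP\Lambda)^{2m},
\]
which is the desired bound for even degrees.

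For odd degrees I would exploit that $\Lambda$ lies on the unit circle: multiplying any monic $P\in\P_{n}$ by $z$ produces a monic element of $\P_{n+1}$ with the same maximum modulus on $\Lambda$ (because $|z|=1$ there), so $L_{n+1}(\Lambda)\le L_{n}(\Lambda)$. Hence $L_{2m+1}(\Lambda)\le L_{2m}(\Lambda)\le K(\CAP\Lambda)^{2m}=\tfrac{K}{\CAP\Lambda}(\CAP\Lambda)^{2m+1}$. As $\Lambda$ is an infinite compact set we have $\CAP\Lambda>0$, and by monotonicity $\CAP\Lambda=\sqrt{2\,\CAP E}\le\sqrt{2\,\CAP[-1,1]}=1$; therefore $B:=K/\CAP\Lambda\ (\ge K)$ satisfies $L_{n}(\Lambda)\le B(\CAP\Lambda)^{n}$ for every $n\in\N$, and $B$ depends only on $\Lambda$.

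The conceptual crux, and the only genuinely nontrivial step, is the transplantation itself: recognizing that $x=\tfrac12(z+\tfrac1z)$ carries $\P_{m}$ on $E$ to $\P_{2m}$ on $\Lambda$ with the modulus unchanged up to the explicit factor $2^{m}$, while keeping control of the leading coefficient so that the lift is monic of the correct degree. Once this is in place, the inequality follows by routine insertion of Totik's and Robinson's theorems; the odd-degree reduction and the observation $\CAP\Lambda>0$ are minor wrinkles rather than real obstacles.
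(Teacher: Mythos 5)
Your proposal is correct and follows essentially the same route as the paper: lift the minimal polynomial $M_m$ on $E$ via $z^m M_m\bigl(\tfrac12(z+\tfrac1z)\bigr)$ to a monic polynomial of degree $2m$ on $\Lambda$, apply Theorems~\ref{Thm-Totik} and \ref{Thm-Robinson}, and handle odd degrees by an extra factor of $z$ at the cost of $K/\CAP\Lambda$. Your explicit verification of monicity and of $0<\CAP\Lambda\le1$ only makes the same argument slightly more careful.
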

\begin{proof}
For the proof, we consider the cases $n$ even and $n$ odd. First let $n$ be even, say $n=2m$. Let
\[
M_{m}(x):=\sum_{k=0}^{m}b_{k}T_{k}(x)=x^{m}+\ldots,
\]
where $b_0,b_1,\ldots,b_{m-1}\in\R$, $b_m=1/2^{m-1}$, be the minimal polynomial on $E$ with corresponding minimum deviation $L_m(E)$. Further, define
\[
P_{2m}(z):=2^mz^m\sum_{k=0}^{m}\tfrac{b_k}{2}(z^k+\tfrac{1}{z^k})=z^{2m}+\ldots
\]
Then
\begin{align*}
L_{2m}(\Lambda)&\leq\max_{z\in\Lambda}|P_{2m}(z)|\\
&=2^m\max_{z\in\Lambda}|\sum_{k=0}^{m}\tfrac{b_k}{2}(z^k+\tfrac{1}{z^k})|
\qquad\text{by~definition}\\
&=2^m\max_{x\in{E}}|\sum_{k=0}^{m}b_{k}T_{k}(x)|
\qquad\text{by~\eqref{Tn}}\\
&=2^mL_{m}(E)\qquad\text{by~definition}\\
&\leq2^{m}K(\CAP{E})^m\qquad\text{by~Theorem\,\ref{Thm-Totik}}\\
&=K(\CAP\Lambda)^{2m}\qquad\text{by~Theorem\,\ref{Thm-Robinson}}
\end{align*}
Let $n$ be odd, say $n=2m+1$. Let $M_{m}$ be defined as above and define
\[
P_{2m+1}(z):=2^mz^{m+1}\sum_{k=0}^{m}\tfrac{b_k}{2}(z^k+\tfrac{1}{z^k})=z^{2m+1}+\ldots
\]
Then, by the same procedure as in the case $n$ even, we get
\[
L_{2m+1}(\Gamma)\leq{K}(\CAP\Gamma)^{2m}=\tfrac{K}{\CAP\Gamma}\,(\CAP\Gamma)^{2m+1}.
\]
Since $\CAP\Gamma\leq1$, for the constant $B$, we take $B=\frac{K}{\CAP\Gamma}$, and the theorem is proved.
\end{proof}

In conclusion, let us note that the connection of minimal polynomials on several intervals and on the corresponding arcs of the unit circle was investigated in \cite{ThiranDetaille} and \cite{PehSch-2002}.

{\bf Acknowledgment.} The author would like to thank Vilmos Totik and Barry Simon for valuable comments on earlier versions of this paper.


\bibliographystyle{amsplain}

\bibliography{LowerBoundLn}

\end{document}